\newtheorem{theorem}{Theorem}[section]
\newtheorem{proposition}[theorem]{Proposition}
\newtheorem{lemma}[theorem]{Lemma}
\newtheorem{conjecture}[theorem]{Conjecture}
\newtheorem{corollary}[theorem]{Corollary}
\DeclareMathOperator{\sub}{Sub}
\DeclareMathOperator{\syl}{Syl}
\newcommand{\binomq}{\genfrac{[}{]}{0pt}{}}
\begin{document}
	
	\title[Number of subgroups]{Counting subgroups of a finite group containing a prescribed subgroup}
	
	\author[L. Guerra]{Lorenzo Guerra}
	\address{Lorenzo Guerra, Dipartimento di Matematica Pura e Applicata,\newline
		University of Milano-Bicocca, Via Cozzi 55, 20126 Milano, Italy} 
	\email{l.guerra@unimib.it}

	\author[F. Mastrogiacomo]{Fabio Mastrogiacomo}
	\address{Fabio Mastrogiacomo, Dipartimento di Matematica ``Felice Casorati", University of Pavia, Via Ferrata 5, 27100 Pavia, Italy} 
	\email{fabio.mastrogiacomo01@universitadipavia.it}

	\author[P. Spiga]{Pablo Spiga}
	\address{Pablo Spiga, Dipartimento di Matematica Pura e Applicata,\newline
		University of Milano-Bicocca, Via Cozzi 55, 20126 Milano, Italy} 
	\email{pablo.spiga@unimib.it}
	\keywords{Subgroups, $p$-groups, bound}        
	\thanks{The authors are members of the GNSAGA INdAM research group and kindly acknowledge their support.\\ The first and third authors are funded by the European Union via the Next
		Generation EU (Mission 4 Component 1 CUP B53D23009410006, PRIN 2022, 2022PSTWLB, Group
		Theory and Applications)}
	\maketitle	
	\begin{abstract}  
		Let $R$ be a finite group, and let $T$ be a subgroup of $R$. We show that there are at most
		\[
			7.3722[R:T]^{\frac{\log_2[R:T]}{4}+1.8919}
		\]
		subgroups of $R$ containing $T$.
	\end{abstract}
	
	\section{introduction}
Let $R$ be a finite group of order $r$. Since any strictly descending chain of subgroups in $R$ has length at most $\log_2 r$, it follows that every subgroup $H \leq R$ can be generated by at most $\lfloor \log_2 r \rfloor$ elements. Consequently, the total number of subgroups in $R$ is bounded above by
\[
r^{\lfloor \log_2 r \rfloor} \leq r^{\log_2 r}.
\]
This classical estimate gives a general upper bound for the number of subgroups in a finite group.

This bound turns out to be nearly tight. For example, if $R$ is an elementary abelian $2$-group of order $r = 2^a$, then the number of subgroups of order $2^{\lfloor a/2 \rfloor}$ is given by the $2$-binomial coefficient
\[
\binomq{a}{\lfloor \tfrac{a}{2} \rfloor}_2 =
\frac{2^a - 1}{2^{\lfloor \tfrac{a}{2} \rfloor} - 1}
\cdot \frac{2^{a-1} - 1}{2^{\lfloor \tfrac{a}{2} \rfloor - 1} - 1}
\cdots
\frac{2^{a - \lfloor \tfrac{a}{2} \rfloor + 1} - 1}{2 - 1}.
\]
This quantity satisfies $\binomq{a}{\lfloor a/2 \rfloor}_2 \geq 2^{a^2/4}$, so in this case we have
\[
2^{a^2/4} = r^{\log_2 r / 4}
\]
as a lower bound for the number of subgroups of $R$.

A much stronger result due to Borovik, Pyber, and Shalev~\cite[Corollary~1.6]{BPS}, which relies on the Classification of Finite Simple Groups, shows that for any finite group of order $r$, the number of subgroups is at most
\[
r^{\log_2 r \cdot \left(\tfrac{1}{4} + o(1)\right)}.
\]
This confirms that the example above essentially achieves the maximal possible growth rate.

In many applications  having an explicit constant in the exponent is important. In this direction, Spiga~\cite{Spiga} obtained an effective version of the result: every finite group of order $r$ has at most
\[
7.3722 \cdot r^{\frac{\log_2 r}{4} + 1.5315}
\]
subgroups. The bound was subsequently improved in~\cite{FusSp}, where the additive constant $1.5315$ in the exponent was removed. This explicit bound was also used as a key step in the proof of the main theorem of~\cite{GSX}.

	In this paper, we are interested in a generalization of this problem. Let $R$ be a finite group, and let $T \leq R$ be a subgroup. Define
	\[
	\sub(R,T) = \{ H \leq R \, : \, T \leq H \}.
	\]
	Our goal is to establish an upper bound for $|\sub(R,T)|$. Using the same argument as above, we can obtain a preliminary bound: if $|R| = r$ and $|T| = t$, then any chain of subgroups of $R$ containing $T$ has length at most $\log_2(r/t)$. Therefore, each such subgroup can be generated by a set of cardinality at most $\lfloor \log_2(r/t) \rfloor$, together with $T$. Thus, there are at most
	\[
	\left( \frac{r}{t} \right)^{\log_2 \left( \frac{r}{t} \right)}
	\]
	subgroups of $R$ containing $T$. As noted earlier, this bound is not optimal—this is already evident when $T$ is the trivial subgroup.
	
	We present here a refined version of this inequality, generalizing the main result of~\cite{Spiga}.
	
	\begin{theorem}\label{thm:main}
		Let $R$ be a finite group, and let $T \leq R$. Then
		\[
		|\sub(R,T)| \leq 7.3722 \cdot [R:T]^{\frac{\log_2 [R:T]}{4} + 1.8919}.
		\]
	\end{theorem}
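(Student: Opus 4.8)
The strategy is to reduce the general case to the case $T = 1$, where we can invoke the explicit bound of Spiga~\cite{Spiga} (or rather, to reduce to a $p$-local situation and imitate its proof). The first observation is a standard reduction: if $T$ is not normal in $R$, replace $T$ by its normal core $T_R = \bigcap_{g \in R} T^g$. Since every subgroup containing $T$ need not contain $T_R$, this does not immediately work, so instead I would argue along a maximal chain. Let me sketch the honest approach. Write $[R:T] = n$. If $T \trianglelefteq R$, then $\sub(R,T)$ is in bijection with the set of all subgroups of the quotient $R/T$, which has order $n$, and Spiga's theorem gives $|\sub(R,T)| \leq 7.3722 \cdot n^{(\log_2 n)/4 + 1.5315}$, which is stronger than what we need. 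So the whole difficulty is concentrated in the case where $T$ is far from normal.

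For the non-normal case, the key idea is to stratify $\sub(R,T)$ according to the conjugacy behaviour of $T$. Consider $N = N_R(T)$, the normalizer. Every $H \in \sub(R,T)$ contains $T$ but $T$ may have many conjugates inside $H$; let $k_H = [N_H(T) : T]^{-1}|H:T|\cdot(\text{number of }R\text{-classes fusing})$ — more cleanly, I would count pairs. For a subgroup $H$ with $T \leq H$, the number of $H$-conjugates of $T$ is $[H : N_H(T)]$. I would partition $\sub(R,T)$ by first choosing the subgroup $K = N_H(T) \cap (\text{something})$; the cleanest bookkeeping is: the map $H \mapsto \langle T^H \rangle =: T^*$ sends $H$ to the normal closure of $T$ in $H$, and then $H/T^* \leq N_R(T^*)/T^*$ modulo identifications. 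Iterating, one builds a chain $T = T_0 \trianglelefteq T_1 \trianglelefteq \cdots$, but the indices multiply, and the crux is to show the product of the resulting index-contributions still obeys the quadratic-exponent bound $n^{(\log_2 n)/4 + c}$. This is where the factor $\tfrac14$ in the exponent must be preserved: one shows $\sum \log_2[T_{i+1}:T_i] = \log_2 n$ and that each stage contributes at most (index)$^{(\log_2(\text{index}))/4}$ times the $o(1)$-type correction, then sums the squares, using $\sum a_i^2 \leq (\sum a_i)^2$.

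The main obstacle, and the reason the additive constant grows from $1.5315$ to $1.8919$, is controlling the number of subgroups $H$ with a fixed normal closure $T^* = \langle T^H\rangle$ and fixed image in $N_R(T^*)$: these correspond to complements-like data, i.e. to the number of subgroups $T \leq H \leq T^*$ with $\langle T^H \rangle = T^*$, which is at most the number of subgroups of $T^*$ containing $T$ — so the recursion is on $[R:T]$ and must be set up to terminate. I expect the proof to run by strong induction on $[R:T]$: pick a subgroup $M$ maximal among proper subgroups of $R$ containing $T$ (if $T$ itself is maximal or $T=R$ the count is trivially at most $2$), split $\sub(R,T)$ into those $H \leq M$ (handled by induction on $[M:T] < [R:T]$) and those $H$ with $H \not\leq M$; the latter are "large" and few, bounded via the structure of maximal subgroups of $R$ and the Borovik–Pyber–Shalev / Spiga machinery applied to $R$ itself with $T = 1$. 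Reconciling the two bounds — an inductive term of shape $7.3722\,[M:T]^{(\log_2[M:T])/4 + 1.8919}$ and an "ambient" term of shape $7.3722\,[R:1]^{\ldots}$ restricted appropriately — forces the constant $1.8919$; verifying the numerical inequality $[M:T]^{(\log_2[M:T])/4} \cdot (\text{overhead}) + (\text{large part}) \leq [R:T]^{(\log_2[R:T])/4 + 1.8919}$ across all splittings $[R:T] = [R:M][M:T]$ is the routine-but-delicate calculation I would defer to a lemma, choosing $1.8919$ exactly so that it closes.
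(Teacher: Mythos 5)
Your proposal is a collection of strategy sketches rather than a proof, and the two strategies you lean on most heavily both have genuine gaps. First, the normal-closure stratification $H \mapsto \langle T^H\rangle =: T^*$ is circular as set up: to use it you must first enumerate the possible subgroups $T^*$, and these are themselves elements of $\sub(R,T)$, so bounding their number is an instance of the very problem you are trying to solve. (The observation that $H/T^*$ sits inside $N_R(T^*)/T^*$, where Spiga's $T=1$ bound applies, is fine, but it only helps once the count of candidates $T^*$ is controlled, and you give no mechanism for that.) Second, in the induction over a subgroup $M$ maximal among proper subgroups containing $T$, your claim that the subgroups $H$ with $H \not\leq M$ are ``large and few'' is unsupported and false in general: such an $H$ need not be large at all, it merely avoids that particular $M$. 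The version of this dichotomy that does work is the one for $p$-groups (Proposition~\ref{prop:pgrp} of the paper), and it works there only because $M$ is normal of index $p$, so that $H\not\leq M$ forces $H=\langle H\cap M, mx\rangle$ and the count reduces to choosing $H\cap M$ and a coset; none of this survives in an arbitrary finite group. Your guess about the provenance of the constant $1.8919$ (reconciling inductive terms across splittings $[R:T]=[R:M][M:T]$) is also not where it comes from.

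The paper's actual route is different and you do not touch it: every $H\in\sub(R,T)$ is generated by $T$ together with one Sylow $p_i$-subgroup $H_i$ of $H$ for each prime $p_i$ dividing $[R:T]$; each $H_i$ lies in some $R_i\in\syl_{p_i}(R)$ with $R_i\cap T\in\syl_{p_i}(T)$; the number of such $R_i$ up to $T$-conjugacy is at most $[R:T]/p_i^{c_i}$ by a double-coset argument (Lemma~\ref{l:1}); and the number of choices of $H_i$ inside a fixed $R_i$ containing $R_i\cap T$ is controlled by the Gaussian-binomial bound for $p$-groups. This yields $|\sub(R,T)|\leq [R:T]^{\ell-1}\prod_i S(p_i,c_i)$, and the constant $1.8919$ then emerges from an explicit optimization over the finitely many ``bad'' primes $\{3,5,7,11,13,17\}$ for which the arithmetic inequality $\frac{r}{t}S(p,c)\leq (r/t)^{c\log_2(r/t)/4}$ can fail. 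To salvage your approach you would need either a non-circular way to count the normal closures $T^*$, or a replacement for the Sylow-generation step; as written, the argument does not close.
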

	
	We believe that the constant $1.8919$ in the exponent can be removed, as in the main result of~\cite{FusSp}.
	
	This theorem also admits a natural interpretation in the context of permutation groups. Suppose that $R$ is a finite transitive group acting on a set $\Omega$, with point stabilizer $T$ and degree $n = [R:T]$. It is well known that the systems of imprimitivity of $R$ correspond bijectively to the subgroups of $R$ containing the stabilizer. Thus, Theorem~\ref{thm:main} yields the following corollary.	
	\begin{corollary}
		Let $R$ be a transitive group of degree $n$. Then $R$ has at most
		\[
		7.3722 \cdot n^{\frac{\log_2 n}{4} + 1.8919}
		\]
		systems of imprimitivity.
	\end{corollary}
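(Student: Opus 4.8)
I would argue by induction on the ordered pair $(|R|,[R:T])$, with lexicographic order, aiming to prove $|\sub(R,T)|\le g([R:T])$ where $g(m)=7.3722\,m^{\log_2 m/4+1.8919}$. First I would make a harmless reduction: if some minimal normal subgroup $N$ of $R$ is contained in $T$, then $\sub(R,T)\cong\sub(R/N,T/N)$ with $[R/N:T/N]=[R:T]$ and $|R/N|<|R|$, so induction applies; hence I may assume that $T$ is core-free in $R$, i.e.\ that $R$ acts faithfully and transitively on the coset space $R/T$ of size $n:=[R:T]$. The base case is $R$ simple (or trivial): for $T=1$ this is the known bound on the number of subgroups of a simple group (\cite{Spiga} or \cite[Corollary~1.6]{BPS}), while for $1<T<R$ the overgroups of $T$ in $R$ are exactly the block systems of a faithful transitive action of a simple group of degree $n$, and using the CFSG-based description of the primitive actions of $R$ one bounds their number well inside $g(n)$.

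For the inductive step assume $R$ is not simple and fix a minimal normal subgroup $N$; by the reduction $N\not\le T$, and $N$, being characteristically simple, is either elementary abelian or a direct power of a nonabelian simple group. Suppose first $TN=R$. Then for every $H\in\sub(R,T)$ Dedekind's modular law applied to $T\le H$ gives $T(N\cap H)=(TN)\cap H=H$, so $H\mapsto N\cap H$ embeds $\sub(R,T)$ into $\sub(N,N\cap T)$; since $[N:N\cap T]=[TN:T]=n$ and $|N|<|R|$, induction yields $|\sub(R,T)|\le|\sub(N,N\cap T)|\le g(n)$. When $N$ is elementary abelian this case can also be treated by hand: $N\cap T\trianglelefteq N$, so $\sub(N,N\cap T)\cong\sub(N/(N\cap T))$ is the subspace lattice of an $\mathbb F_p$-space of order $n=p^a$, of size $\sum_{j=0}^{a}\binomq{a}{j}_p\le (a+1)\bigl(\prod_{i\ge1}(1-p^{-i})^{-1}\bigr)p^{a^2/4}$, which, since $p^a\ge 2^a$, sits comfortably below $g(p^a)$. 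This elementary abelian configuration is the extremal one, and it is where the constants $7.3722$ and $1.8919$ get calibrated.

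Now suppose $TN<R$, and split $\sub(R,T)=\{H:N\le H\}\sqcup\{H:N\not\le H\}$. The first part is $\sub(R,TN)$, which has $[R:TN]<n$ and so is bounded by $g([R:TN])$ by induction. For the second part I would send $H$ to the pair $(HN/N,\ N\cap H)$: the first coordinate lies in $\sub(R/N,TN/N)$, of size at most $g([R:TN])$ by induction (as $|R/N|<|R|$), and the second lies in $\sub(N,N\cap T)\setminus\{N\}$, of size at most $g([N:N\cap T])$ by induction (as $|N|<|R|$). For a fixed such pair, $H$ is a supplement of $N$ inside the preimage of $HN/N$ that contains $T$, and the number of these is controlled by a complement count — cohomological when $N$ is abelian, and via the subdirect-product structure of $N$ when $N$ is nonabelian. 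Writing $n_1=[R:TN]$ and $n_0=[N:N\cap T]$, so $n_0n_1=n$, the two main factors combine because $\tfrac14(\log_2 n_0)^2+\tfrac14(\log_2 n_1)^2\le\tfrac14(\log_2 n)^2$, and the surplus — the cross term $\tfrac12(\log_2 n_0)(\log_2 n_1)$ — is what has to accommodate both the supplement count and the accumulated polynomial-in-$\log$ factors (with the base of the induction and the finitely many small indices $n$ handled separately by the trivial bound $n^{\log_2 n}$).

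The main obstacle is precisely this last piece of bookkeeping. The supplement count must be estimated not by its worst case for a single pair, but by summing over the possible orders of $N\cap H$ and of $HN/N$ using $p$-binomial inequalities, in the same spirit as the elementary abelian computation above; only then does the total remain below $g(n)$. Ensuring that the leading constant does not square up through the product of the two factors, and that the additive exponent term does not exceed $1.8919$, is the technical heart of the argument and closely parallels the estimates of \cite{Spiga}; it is also plausibly where, with more care, the additive constant could be removed entirely as in \cite{FusSp}.
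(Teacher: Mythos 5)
The corollary is, in the paper, an immediate consequence of Theorem~\ref{thm:main}: for a transitive group $R$ with point stabilizer $T$ and degree $n=[R:T]$, the systems of imprimitivity are in bijection with the subgroups of $R$ containing $T$ (a block system corresponds to the setwise stabilizer of the block containing the base point), so the bound on $|\sub(R,T)|$ transfers verbatim. You instead set out to reprove the subgroup-counting bound itself by an induction on minimal normal subgroups. That is a genuinely different route from the paper's, which works prime by prime: it bounds $|\sub(P,T)|$ for $p$-groups by Gaussian binomials, reconstructs an overgroup $H$ from $T$ together with a $T$-class of Sylow subgroups of $R$ and a subgroup of each, and then does explicit arithmetic. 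But your argument, as written, has concrete gaps and does not close.

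First, the base case. When $R$ is nonabelian simple and $1<T<R$, you assert that the overgroups of $T$ can be bounded ``using the CFSG-based description of the primitive actions of $R$.'' This is circular: the overgroups of $T$ in a simple group are exactly the block systems of the faithful transitive action on $R/T$, i.e.\ exactly the quantity the corollary asks you to bound, and no actual estimate is produced. Second, in the case $TN<R$, $N\not\le H$, everything hinges on counting the fibre of $H\mapsto (HN/N,\,N\cap H)$, i.e.\ the supplements of $N$ containing $T$ with prescribed image and intersection; you explicitly defer this (``cohomological when $N$ is abelian\dots''), and you also defer the verification that the product of the two inductive bounds---with its squared leading constant $7.3722^2\approx 54.3$ and its doubled additive exponent $2\cdot 1.8919$---fits under $7.3722\,n^{\log_2 n/4+1.8919}$ using only the cross term $\tfrac{1}{2}(\log_2 n_0)(\log_2 n_1)$. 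These are not bookkeeping details: the number of complements to an elementary abelian $N$ can be as large as $|N|^{d}$ with $d$ the number of generators of the quotient, and the cross term is small precisely when $n_0$ or $n_1$ is small, so without a worked estimate the induction does not go through. As it stands the proposal is a plan for a different (and substantially harder) proof, not a proof; the intended argument is simply to quote the block-system/overgroup bijection and apply Theorem~\ref{thm:main}.
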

	
	The proof of Theorem~\ref{thm:main} closely follows the argument in~\cite{Spiga}, which gives an elementary proof of~\cite[Corollary~$1.6$]{BPS}. A central component of the argument in~\cite[Corollary~$1.6$]{BPS} relies on estimating the number of maximal subgroups in a finite group. The papers~\cite{B, LPS} provide valuable results in this direction, though the presence of implicit constants makes it difficult to extract explicit bounds.\\
	The analogous result for the number of maximal system of imprimitivity can be found in~\cite{LMS}.\\

	For a non-zero integer $n$, we define $\lambda(n)$ as the number of prime divisors of $n$, counted with multiplicities. We believe that our main result can be improved by replacing $\log_2 ([R:T])$ with $\lambda([R:T])$.
	\begin{conjecture}
		There exists a constant $c>0$ such that if $R$ is a finite group, and $T \leq R$, then
		\[
			|\sub(R,T)| \leq [R:T]^{\lambda([R:T])+c}.
		\]
	\end{conjecture}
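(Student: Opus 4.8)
The plan is to adapt the inductive scheme of~\cite{Spiga} to the relative setting, arguing by induction on the index $n=[R:T]$ (the case $n=1$ being vacuous), after one preliminary reduction. Let $N=\bigcap_{g\in R}T^{g}$ be the normal core of $T$ in $R$: the map $H\mapsto H/N$ is a bijection from $\sub(R,T)$ onto $\sub(R/N,T/N)$, we have $[R/N:T/N]=[R:T]$, and $T/N$ is core-free in $R/N$. Hence we may assume that $R$ acts faithfully and transitively on the $n$ cosets of $T$, so that $|R|\le n!$ and $\log_{2}|R|\le n\log_{2}n$. Under this identification $\sub(R,T)$ is exactly the set of systems of imprimitivity of the action (as recalled before the corollary), and a maximal element of $\sub(R,T)\setminus\{R\}$ is a maximal subgroup of $R$ containing $T$, i.e.\ a finest non-trivial block system; this dictionary is used throughout, and it makes the corollary an immediate consequence of the theorem.

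The combinatorial heart of the estimate is the case in which $R$ is nilpotent, since the term $\tfrac14\log_{2}n$ in the exponent is already sharp for $T=1$ and $R$ elementary abelian: there $\sub(R,T)$ has size $\sum_{k}\binomq{m}{k}_{2}\ge 2^{m^{2}/4}=n^{(\log_{2}n)/4}$ when $|R|=2^{m}=n$. For nilpotent $R$ one reduces to Sylow subgroups and counts directly in the subgroup lattice, using that among $p$-groups of order $p^{m}$ the elementary abelian group has the most subgroups and that $\binomq{m}{k}_{p}\le p^{k(m-k)}\le p^{m^{2}/4}$; this yields at most $c\cdot n^{(\log_{2}n)/4}$ subgroups containing $T$, with an explicit small constant $c$, so this regime is what fixes both the $\tfrac14$ and the multiplicative constant.

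For general $R$, every $H$ with $T\le H<R$ lies below some maximal subgroup of $R$ containing $T$, so
\[
|\sub(R,T)|\ \le\ 1+\sum_{\substack{M\ \text{maximal in }R\\ T\le M}}|\sub(M,T)|,
\]
and since $[M:T]=n/[R:M]\le n/2$, the inductive hypothesis bounds each summand by $7.3722\,(n/[R:M])^{\frac{\log_{2}n-1}{4}+1.8919}$. Grouping the maximal overgroups of $T$ by their index $j=[R:M]$ and letting $m_{j}$ be their number, it would suffice for the induction to establish
\[
\sum_{j\ge 2} m_{j}\, j^{-\left(\frac{\log_{2}n-1}{4}+1.8919\right)}\ \le\ n^{1/4};
\]
when $\log_{2}n$ is large the left-hand side is dominated by the smallest values of $j$, so the relevant input is a good bound on $m_{2},m_{3},\dots$ in terms of $n$, obtained from the estimates on the number of maximal subgroups of a finite group of prescribed index in~\cite{B,LPS} together with $|R|\le n!$.

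The main obstacle is that the recursion above, used naively, overcounts badly: already for $R$ elementary abelian and $T=1$ it loses a factor of order $n^{1/2}$, which neither the constant $7.3722$ nor the additive term $1.8919$ can absorb. So, exactly as in~\cite{Spiga}, one must replace it by an efficient count — treating one conjugacy class of maximal subgroups at a time, or first splitting off a suitable normal subgroup (for instance the generalized Fitting subgroup, whose nilpotent part produces the extremal $n^{(\log_{2}n)/4}$ while $R/F^{*}(R)\hookrightarrow\operatorname{Out}(F^{*}(R))$ contributes only lower-order terms) and then multiplying a ``bottom'' count by a ``quotient'' count with a controlled gluing factor. Keeping every inequality effective and arranging the bookkeeping so that the constants collapse to $7.3722$ and $1.8919$ is the technical core of the argument; the increase of the additive exponent from the $1.5315$ of~\cite{Spiga} to $1.8919$ here should be exactly the cost of prescribing the subgroup $T$.
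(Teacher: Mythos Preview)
The statement you were asked to prove is a \emph{conjecture}, and the paper does not prove it: immediately after stating it the authors write ``We have little evidence for this conjecture: it is true for nilpotent groups, from the results in Section~\ref{sec:1}.'' So there is no ``paper's own proof'' to compare against, and any purported complete proof would be going well beyond what the paper establishes.

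More importantly, your proposal is not a proof of this conjecture at all. The conjecture asks for a bound of the shape $[R:T]^{\lambda([R:T])+c}$, where $\lambda(n)$ is the number of prime factors of $n$ counted with multiplicity; your entire sketch is aimed instead at the bound $7.3722\,[R:T]^{\log_2[R:T]/4+1.8919}$, which is Theorem~\ref{thm:main}. These are genuinely different targets and neither implies the other: for $[R:T]=p$ a large prime one has $\lambda=1$ while $\tfrac14\log_2 p$ is unbounded, so the conjectured bound is far stronger there; conversely for $[R:T]=2^a$ one has $\lambda=a=\log_2[R:T]$, so the conjectured exponent $a+c$ is much weaker than the theorem's $a/4+1.8919$. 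Nothing in your argument tracks $\lambda$ rather than $\log_2$, so even if every step were filled in you would have reproved Theorem~\ref{thm:main}, not settled the conjecture.

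As an aside, even viewed as a plan for Theorem~\ref{thm:main} your outline diverges from the paper's actual proof. The paper does \emph{not} induct on $[R:T]$ through maximal overgroups of $T$, nor does it invoke $F^{*}(R)$ or the maximal-subgroup counts of~\cite{B,LPS}. Instead it bounds $|\sub(R,T)|$ directly by choosing, for each prime $p_i\mid[R:T]$, a Sylow $p_i$-subgroup $R_i$ with $R_i\cap T\in\syl_{p_i}(T)$ (up to $T$-conjugacy, Lemma~\ref{l:1}) and then a subgroup of $R_i$ above $R_i\cap T$ (Proposition~\ref{prop:pgrp}); this yields the clean product bound of Proposition~\ref{prop:nrsbgrpindex}, after which the constants $7.3722$ and $1.8919$ emerge from purely arithmetic manipulations. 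Your own sketch explicitly concedes that the maximal-subgroup recursion ``overcounts badly'' and leaves the repair unspecified, so it is not a proof even of the theorem.
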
 
	We have little evidence for this conjecture: it is true for nilpotent groups, from the results in Section~\ref{sec:1}.
	The function $\lambda$ is inspired by its analogous use in the famous and important result of Pyber in~\cite{Py}.\\

	A key step in the proof is establishing Theorem~\ref{thm:main} for $p$-groups. This is accomplished in Section~\ref{sec:1}. The general case is addressed in Section~\ref{sec:2}, where we first present the subgroup-counting argument and then develop some arithmetic observations that lead to the final bound.
	
	Throughout, if $k$ is an integer, $R$ is a group and $T \leq R$, we define
	\[
	\sub_k(R,T) = \{ H \in \sub(R,T) \, : \, [R:H] = k \}.
	\]
	\section{Bound for groups of prime power order}\label{sec:1}
	If $p$ is a prime and $m,r$ are non-negative integers, the Gaussian binomial coefficient is defined by
	\[
		\binomq{m}{r}_p = \prod_{i=0}^{r-1}\frac{p^{m-i}-1}{p^{i+1}-1}.
	\]
	We will use this recurrence equation involving Gaussian binomial coefficient:
	\[
		\binomq{m+1}{r}_p = \binomq{m}{r}_p p^r + \binomq{m}{r-1}_p.
	\]
	\begin{proposition}\label{prop:pgrp}
		Let $P$ be a $p$-group, with $p$ prime, let $T$ be a subgroup of $R$ having index $p^c$. Then,
		\[
			|\sub_{p^k}(P,T)| \leq \binomq{c}{k}_p.
		\]
	\end{proposition}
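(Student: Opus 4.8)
The plan is to prove the bound $|\sub_{p^k}(P,T)| \leq \binomq{c}{k}_p$ by induction on the order of $P$, exploiting the recurrence $\binomq{m+1}{r}_p = \binomq{m}{r}_p p^r + \binomq{m}{r-1}_p$ stated just above the proposition. The base case is when $[P:T] = p^c$ with $c = 0$ (so $T = P$), where both sides equal $1$ for $k = 0$ and $0$ otherwise; more useful as a genuine base is $c = 1$, where the only proper overgroup-free situation gives $\binomq{1}{0}_p = \binomq{1}{1}_p = 1$.

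For the inductive step I would pick a maximal subgroup $M$ of $P$ with $T \leq M$; since $P$ is a $p$-group, $M$ is normal of index $p$, and $[M:T] = p^{c-1}$. The key is to partition $\sub_{p^k}(P,T)$ according to whether a subgroup $H$ of index $p^k$ is contained in $M$ or not. Those $H$ with $H \leq M$ are exactly the elements of $\sub_{p^k}(M,T)$, of which there are at most $\binomq{c-1}{k}_p$ by induction. For those $H \not\leq M$, we have $HM = P$, so $[H : H \cap M] = p$ and $H \cap M$ has index $p^k$ in $M$, i.e. $H \cap M \in \sub_{p^k}(M,T)$... but here I need to be careful: I want to bound the number of such $H$ by something like $p^{k}$ times $|\sub_{p^{k-1}}(M,T)|$, to match the recurrence. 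The natural move is: for $H \not\leq M$, set $K = H \cap M$, which lies in $\sub_{p^{k-1}}(M,T)$ (it has index $p^{k-1}$ in $M$, hence index $p^k$ in $P$... wait, that gives index $p^k$, not what I want). Let me reconsider: if $H \not\leq M$ then $H$ has index $p^k$ in $P$ and $K = H \cap M$ has index $p^{k}$ in $P$ as well since $[P:H]=[M:K]$. Hmm — so actually $K \in \sub_{p^k}(P,T) \cap \sub_{\cdot}(M,T)$ has index $p^{k-1}$ in $M$. So I should fiber the non-contained $H$'s over $\sub_{p^{k-1}}(M,T)$: given $K \in \sub_{p^{k-1}}(M,T)$, count the $H \leq P$ with $H \cap M = K$ and $HM = P$; each such $H$ normalizes... actually each such $H$ satisfies $K \trianglelefteq H$ (since $K = H \cap M \trianglelefteq H$ as $M \trianglelefteq P$), and $H/K$ is a complement to $M/K$ in $P/K$-ish picture, so $H$ corresponds to a choice inside $N_P(K)/K$; a crude bound is that the number of such $H$ is at most $[M:K]= p^{k-1}$... this still doesn't obviously give the clean factor $p^k$.

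The honest approach, and the one I expect actually works, is to observe that the standard bijective/injective correspondence used to prove the Gaussian binomial recurrence for subspaces carries over: map $H \in \sub_{p^k}(P,T)$ with $H \not\leq M$ to the pair $(H \cap M, \text{``direction''})$, and count directions by the transversal $M/K$, giving at most $p^{k}\cdot|\sub_{p^{k-1}}(M,\text{appropriate subgroup})|$ — but since $T \leq K$ automatically, this is at most $p^k \binomq{c-1}{k}_p$... no. The cleanest correct bookkeeping: the $H$ with $H\leq M$ number at most $\binomq{c-1}{k-1}_p$ after re-indexing via $[M:H]=p^{k-1}$? I will need to sort out which of $H\leq M$ versus $H\not\leq M$ pairs with which term of the recurrence; the symmetry $\binomq{c}{k}_p = \binomq{c}{c-k}_p$ suggests working with coindices in $M$ rather than indices, i.e. replacing $(c,k)$ by complementary data so that "$H\leq M$" contributes $\binomq{c-1}{k}_p$ and "$H\not\leq M$" contributes at most $p^{k}\binomq{c-1}{k-1}_p$ — wait, that doesn't match either since the recurrence has the $p^r$ on the $\binomq{m}{r}$ term. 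Having $H \leq M$: these are counted by $|\sub_{p^k}(M,T)| \leq \binomq{c-1}{k}_p$; having $HM = P$: I claim these are at most $p^k \binomq{c-1}{k-1}_p$, and summing gives exactly $\binomq{c-1}{k}_p p^k + \binomq{c-1}{k-1}_p$? That's $\binomq{c-1}{k}_p p^k + \binomq{c-1}{k-1}_p$, which by the recurrence equals... $\binomq{c}{k}_p$ only if the $p^k$ multiplies the $\binomq{c-1}{k}_p$ term — yes! So I need: number of $H$ with $HM=P$ is at most $p^k$, while each $H\leq M$ is counted once, OR number of $H$ with $HM=P$ is at most $\binomq{c-1}{k-1}_p$ and each $H \leq M$ contributes with multiplicity... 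The correct split is: $|\sub_{p^k}(P,T)| = |\{H : H\leq M\}| + |\{H: HM = P\}| \leq \binomq{c-1}{k}_p + |\{H : HM=P\}|$ and I must show $|\{H:HM=P\}| \leq (p^k-1)\binomq{c-1}{k}_p + \binomq{c-1}{k-1}_p$ to close the induction; equivalently, grouping $H\leq M$ and the $HM=P$ ones that "project" to the same $H\cap M$, get $p^k$ subgroups for each of the $\binomq{c-1}{k}_p$ choices of index-$p^k$ subgroup of $M$ (namely $K$ itself together with $p^k-1$ complements-type lifts, if they exist — at most), plus $\binomq{c-1}{k-1}_p$ extra from $H$ with $H\cap M$ of index $p^{k-1}$ in $M$. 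I expect the main obstacle to be this last counting step: showing that for a fixed $K \leq M$ of index $p^{k-1}$ in $M$ with $T \leq K$, the number of $H \leq P$ with $H \cap M = K$ and $HM = P$ is at most... (and more delicately, that the overcounting across different $K$'s is controlled), using that such $H$ all lie between $K$ and $N_P(K)$ and correspond to order-$p$ subgroups of a section — reducing to the elementary-abelian / $1$-dimensional count that underlies the Gaussian recurrence. Modulo verifying that $N_P(K)$-analysis gives the bound $p^{k-1}$ (or whatever exact figure the recurrence demands), the induction then goes through verbatim.
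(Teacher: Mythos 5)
Your overall strategy is the paper's: fix a maximal subgroup $M$ of $P$ with $T\le M$ (so $M\trianglelefteq P$, $[P:M]=p$, $[M:T]=p^{c-1}$), split $\sub_{p^k}(P,T)$ according to whether $H\le M$ or $HM=P$, and close the induction with the recurrence $\binomq{c}{k}_p=\binomq{c-1}{k}_p\,p^k+\binomq{c-1}{k-1}_p$. However, as written the proposal does not constitute a proof, for two reasons. First, the index bookkeeping is wrong in several places and you never settle it: if $H\le M$ then $[M:H]=p^{k-1}$, so these $H$ lie in $\sub_{p^{k-1}}(M,T)$ and number at most $\binomq{c-1}{k-1}_p$ (not $\binomq{c-1}{k}_p$, as you assert at the start and again in your ``correct split''); if $H\not\le M$ then $[M:H\cap M]=[P:H]=p^k$, so $K=H\cap M$ lies in $\sub_{p^k}(M,T)$ (you variously place it in $\sub_{p^{k-1}}(M,T)$ and claim $[P:K]=p^k$, when in fact $[P:K]=p^{k+1}$). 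With the correct indices the two cases pair with the two terms of the recurrence with no ambiguity: the factor $p^k$ must multiply $\binomq{c-1}{k}_p$, i.e.\ the count of possible intersections $K=H\cap M$.

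Second, and more seriously, the one genuinely nontrivial step --- that for a fixed $K\le M$ with $T\le K$ and $[M:K]=p^k$ there are at most $p^k$ subgroups $H$ with $H\cap M=K$ and $HM=P$ --- is exactly the step you defer (``modulo verifying that $N_P(K)$-analysis gives the bound\dots''), and your proposed route through $N_P(K)/K$ and its order-$p$ subgroups does not obviously yield the bound $[M:K]$: that quotient can have far more than $p^k$ subgroups of order $p$, and moreover such an $H$ need not normalize $K$ a priori (one only knows $K=H\cap M\trianglelefteq H$ because $M\trianglelefteq P$). The paper's argument for this step is elementary and avoids $N_P(K)$ entirely: fix $x\in P\setminus M$; since $[P:M]=p$ and $x^p\in M$, every element of $P$ is $mx^\beta$ with $m\in M$, $0\le\beta\le p-1$, and any $H$ with $HM=P$ contains some $mx^\beta$ with $\beta\not\equiv 0\pmod p$; raising it to the power $\gamma$ with $\beta\gamma\equiv 1\pmod p$ produces an element $m'x\in H$, whence $H=\langle K,m'x\rangle$, and this subgroup depends only on the coset $Km'$, of which there are $[M:K]=p^k$. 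You would need to supply this (or an equivalent) argument for the induction to go through.
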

	\begin{proof}
		We argue by induction on $k$. If $k=0$, then the statement is trivial. Suppose that $k>0$, and take a maximal subgroup $M \leq P$ containing $T$. There are two possible cases for a subgroup $H$ of index $p^k$: either $H \leq M$, or $H \not \leq M$.\\
		If $H \leq M$,  then by induction there are at most
		\[
			\binomq{c-1}{k-1}_p
		\]
		choices for $H$. \\
		Suppose that $H \not \leq M$. Take $x \in P \setminus M$. Then, $P = \langle M,x \rangle$. Moreover, since $M \trianglelefteq P$, every element of $P$ is of the form $mx^\alpha$, for some $\alpha \in \mathbb{Z}$ and for some $m \in M$. Being $M$ maximal, $x^p \in M$. Therefore, up to the choice of $m$, every element of $P$ is of the form $m' x^\beta$, for some $m' \in M$, and $\beta \in \{0,1,\dots,p-1\}$. \\
		Observe that $H = \langle H \cap M, mx^\beta \rangle$, with $m \in M$ and $\beta \in \{1,\dots,p-1\}$. Up to replacing $mx^\beta$ with $(mx^\beta)^\gamma$, where $\beta \gamma \equiv 1 \mod p$, we can assume, without loss of generality, that $\beta =1$, so that $H = \langle H \cap M, mx\rangle$. 
		Moreover, if $(H \cap M) m' = (H \cap M)m$, then $\langle H \cap M, mx\rangle = \langle H \cap M, m'x\rangle$. Therefore, to determine the subgroup $H$ it is enough to specify its intersection with $M$, and a coset in $M$ of $H \cap M$. There are $[M : H \cap M] = p^k$ cosets, and, by induction, at most 
		\[
			\binomq{c-1}{k}_p
		\]
		choices for $H \cap M$. Thus, we have in total at most
		\[
			\binomq{c-1}{k-1}_p + \binomq{c-1}{k}_p p^k = \binomq{c}{k}_p
		\]
		subgroups of index $p^k$.
	\end{proof}
	In~\cite{Spiga}, the author defines a function $S(p,a)$ such that each group of order $p^a$ has at most $S(p,a)$ subgroups. The function is defined as follows.
	\[
		S(p,a) = 
		\begin{cases}
			2 &\text{ if } a=1, \\
			p+3 &\text{ if } a=2,\\
			2p^2+2p+4 &\text{ if } a=3,\\
			p^4+3p^3+4p^2+3p+5 &\text{ if } a=4,\\
			2p^6 + 2p^5+6p^4+6p^3+6p^2+4p+6 &\text{ if } a=5,\\
			c(p)p^{\frac{a^2}{2}} &\text{ if } a\geq 6,
		\end{cases}
	\]
	where 
	\begin{align}\label{eq:cp}
		&C(p) = \prod_{i\geq1} \frac{1}{1-\frac{1}{p^i}}, \\
		&c(p) = 2.129 C(p).
	\end{align}
	Moreover, he observed that
	\[
		\sum_{k=0}^{a}\binomq{a}{k}_p \leq S(p,a).
	\]
	In particular, if $P$ and $T$ are as in Proposition~\ref{prop:pgrp}, we have
	\[
		|\sub(P,T)| \leq \sum_{k=0}^{c} |\sub_{p^k}(P,T)| \leq \sum_{k=0}^{c}\binomq{c}{k}_p \leq S(p,c).
	\]
	\section{Proof of Theorem~\ref{thm:main}}\label{sec:2}
	We start with a preliminary observation.
\begin{lemma}\label{l:1}
Let $R$ be a finite group, let $T$ be a subgroup of $R$, let $p$ be a prime number, and let $\syl_p(R,T)$ denote the set of $p$-Sylow subgroups $P$ of $R$ such that $T \cap P \in \syl_p(T)$. Then the number of $T$-orbits under conjugation on $\syl_p(R,T)$ is at most $[R:T]/p^c$, where $p^c$ is the largest power of $p$ dividing $[R:T]$.
\end{lemma}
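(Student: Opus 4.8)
The plan is to translate the $T$-conjugation action on $\syl_p(R,T)$ into an action of a single fixed Sylow $p$-subgroup of $R$ on a coset space, and to compare orbit lengths there. Fix a Sylow $p$-subgroup $P_0$ of $R$, write $|R|_p = p^b$ and $|T|_p = p^a$ (so that $p^c = p^{b-a}$ is the $p$-part of $[R:T]$, as in the statement), and consider the natural left action of $R$ on $\Omega = R/T$, a set of size $[R:T]$. The heart of the argument is a computation of the lengths of the $P_0$-orbits on $\Omega$. The stabilizer in $P_0$ of a coset $gT$ is $P_0 \cap gTg^{-1}$, which is conjugate via $g$ to $P_0^g \cap T$; hence the $P_0$-orbit of $gT$ has length $[P_0 : P_0 \cap gTg^{-1}] = p^b / |P_0^g \cap T|$. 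Since $P_0^g$ is a Sylow $p$-subgroup of $R$, the group $P_0^g \cap T$ is a $p$-subgroup of $T$, so $|P_0^g \cap T| \le p^a$, with equality exactly when $P_0^g \in \syl_p(R,T)$. Consequently every $P_0$-orbit on $\Omega$ has length at least $p^c$, and the $P_0$-orbit of $gT$ has length exactly $p^c$ if and only if $P_0^g \in \syl_p(R,T)$. Since the $P_0$-orbits on $\Omega$ of length $p^c$ are then pairwise disjoint subsets of $\Omega$ each of size $p^c$, there are at most $[R:T]/p^c$ of them.

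It remains to exhibit a surjection from the set of length-$p^c$ orbits of $P_0$ on $\Omega$ onto the set of $T$-orbits on $\syl_p(R,T)$. Here I would use the standard bijection $\syl_p(R) \leftrightarrow N_R(P_0) \backslash R$, $N_R(P_0)g \mapsto P_0^g$, which is equivariant for the right $R$-actions and therefore identifies the $T$-orbits on $\syl_p(R)$ — that is, the $T$-conjugacy classes of Sylow $p$-subgroups of $R$ — with the double cosets $N_R(P_0) \backslash R / T$; likewise the $P_0$-orbits on $\Omega$ are the double cosets $P_0 \backslash R / T$. The inclusion $P_0 \le N_R(P_0)$ induces a well-defined surjection $P_0 \backslash R / T \twoheadrightarrow N_R(P_0) \backslash R / T$, i.e.\ a surjection sending the $P_0$-orbit of $gT$ to the $T$-conjugacy class of $P_0^g$. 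As $\syl_p(R,T)$ is a union of $T$-conjugacy classes, the equivalence established above shows that a $P_0$-orbit on $\Omega$ has length $p^c$ precisely when its image under this surjection is a $T$-orbit contained in $\syl_p(R,T)$; hence the surjection restricts to a surjection from the length-$p^c$ orbits onto the $T$-orbits on $\syl_p(R,T)$. Combining this with the count in the previous paragraph bounds the number of $T$-orbits on $\syl_p(R,T)$ by $[R:T]/p^c$, as desired.

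I expect the main difficulty to be purely organizational: keeping the left/right conventions consistent so that "$P_0$-orbits on $R/T$", "double cosets $P_0 \backslash R / T$", and "$T$-conjugacy classes of Sylow $p$-subgroups of $R$" are matched up correctly, and checking that the property of having orbit length $p^c$ descends along the surjection of double-coset spaces. One can avoid the double cosets altogether by checking directly that $gT \mapsto [P_0^g]_T$ is well defined on $P_0$-orbits and surjective, but the double-coset language makes both verifications routine. It is also worth recording at the outset that $\syl_p(R,T) \neq \emptyset$: a Sylow $p$-subgroup $S$ of $T$ lies in some Sylow $p$-subgroup $P_0$ of $R$, and then $P_0 \cap T$ is a $p$-subgroup of $T$ containing $S$, so $P_0 \cap T = S \in \syl_p(T)$. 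This is not logically necessary, but it confirms that the bound is not vacuous and that $p^c$ is genuinely attained as an orbit length.
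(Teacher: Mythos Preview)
Your argument is correct. It is close in spirit to the paper's proof but organized differently. The paper works directly with $({\bf N}_R(P),T)$-double cosets, which biject with $T$-orbits on $\syl_p(R)$, and shows that each such double coset lying in $\mathcal{P}=\{x:T\cap P^x\in\syl_p(T)\}$ contains at least $p^c$ right $T$-cosets by computing the $p$-part of $[{\bf N}_R(P^{x_i}):{\bf N}_R(P^{x_i})\cap T]$. You instead let $P_0$ act on $R/T$, obtain the orbit-length bound $[P_0:P_0\cap gTg^{-1}]\ge p^c$ by an easier size comparison (no normalizer needed), and then push forward along the surjection $P_0\backslash R/T\twoheadrightarrow {\bf N}_R(P_0)\backslash R/T$. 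The paper's route is one step shorter, since the double cosets already parametrize the $T$-orbits; your route trades that directness for a cleaner core computation (pure $p$-group orbit-stabilizer) at the cost of the extra surjection step. Either way the bound comes from fitting at most $[R:T]/p^c$ blocks of size $\ge p^c$ inside $R/T$.
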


\begin{proof}
Let $P \in \syl_p(R,T)$. By Sylow's theorem, each element of $\syl_p(R,T)$ is of the form $P^x$ for some $x \in R$ such that $T \cap P^x \in \syl_p(T)$. Define
\[
\mathcal{P} = \{x \in R \mid T \cap P^x \in \syl_p(T)\},
\]
and observe that $\mathcal{P}$ is a union of $({\bf N}_R(P), T)$-double cosets.

Let $x_1, \ldots, x_t$ be a set of representatives for the $({\bf N}_R(P), T)$-double cosets in $\mathcal{P}$. Observe that $t$ is the number of $T$-orbits on $\syl_p(R,T)$. Then,
\begin{align*}
\frac{|{\bf N}_R(P)x_iT|}{|T|} &= |{\bf N}_R(P) : {\bf N}_R(P) \cap T^{x_i^{-1}}| = |{\bf N}_R(P^{x_i}) : {\bf N}_R(P^{x_i}) \cap T|.
\end{align*}
Since the largest power of $p$ dividing $|{\bf N}_R(P^{x_i})|$ is $|P|$, and the largest power of $p$ dividing $|{\bf N}_R(P^{x_i}) \cap T|$ is $|T \cap P|$, it follows that
\[
|{\bf N}_R(P^{x_i}) : {\bf N}_R(P^{x_i}) \cap T|
\]
is divisible by $p^c$. Therefore, each $({\bf N}_R(P), T)$-double coset in $\mathcal{P}$ consists of at least $p^c$ distinct right cosets of $T$ in $R$. Consequently, the number of such double cosets is at most $[R:T]/p^c$.
\end{proof}
	
	\begin{proposition}\label{prop:nrsbgrpindex}
		Let $R$ be a finite group, and let $T < R$. Suppose that 
		\[
			[R:T]=\prod_{i=1}^{\ell}p_i^{c_i},
		\]
		where $c_i>0$ and $p_i$ are distinct primes. Then,
		\[
			|\sub(R,T)| \leq [R:T]^{\ell-1} \prod_{i=1}^{\ell} S(p_i,c_i).
		\]
	\end{proposition}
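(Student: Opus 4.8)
The plan is to induct on the number $\ell$ of distinct prime divisors of $[R:T]$, using Lemma~\ref{l:1} to pass from $R$ to a point stabilizer at the ``top'' of a Sylow subgroup and the $p$-group bound from Proposition~\ref{prop:pgrp} (in the packaged form $S(p,c)$) at the ``bottom''. For $\ell=1$ the claim is exactly the inequality $|\sub(P,T)|\le S(p_1,c_1)$ recorded at the end of Section~\ref{sec:1}, since then $R$ need not be a $p$-group but every subgroup containing $T$ has $p'$-index... more precisely we should first reduce to the case where $R/\mathrm{core}_R(T)$ is involved, or simply observe: fix the prime $p=p_\ell$, and to each $H\in\sub(R,T)$ associate a Sylow $p$-subgroup of $H$ containing a fixed Sylow $p$-subgroup $T\cap P$ of $T$, where $P\in\syl_p(R,T)$.

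More concretely, here is the dichotomy I would set up. Fix $p=p_\ell$ with $p^c\,\|\,[R:T]$, and fix $P\in\syl_p(R,T)$ (nonempty by Sylow's theorem). For $H\in\sub(R,T)$, consider the set $\syl_p(H,T)$ of Sylow $p$-subgroups of $H$ meeting $T$ in a Sylow $p$-subgroup of $T$; this is nonempty and $H$ acts transitively on it by conjugation. Every member of $\syl_p(H,T)$ lies in some member of $\syl_p(R,T)$, so after conjugating $H$ by an element of $T$ we may assume $T\cap P\le H$ and in fact that $T\cap P$ is a Sylow $p$-subgroup of $H$; by Lemma~\ref{l:1} the number of $T$-conjugacy classes of such $H$'s needed is at most $[R:T]/p^c$. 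So it suffices to bound, for a fixed choice, the number of $H\in\sub(R,T)$ with $T\cap P\in\syl_p(H)$. For such an $H$, write $N=N_H(T\cap P)$; by the Frattini argument $H=N\cdot(\text{something})$ is awkward, so instead I would intersect: $H$ is determined by the pair $(H\cap M,\, H\cap P)$ for a suitable $p$-complement data — this is the point where the induction on $\ell$ enters, replacing $R$ by a Hall $\{p_1,\dots,p_{\ell-1}\}$-type object. Concretely, $H\cap P\in\sub_{p^{k}}(P,T\cap P)$ for some $k\le c$, contributing a factor $\sum_k \binom{c}{k}_p\le S(p,c)$ by Proposition~\ref{prop:pgrp}; and the ``prime-to-$p$ part'' of $H$ ranges over subgroups containing $T$ in a group whose index over $T$ has only $\ell-1$ prime divisors, contributing a factor $[R:T]_{p'}^{\,\ell-2}\prod_{i<\ell}S(p_i,c_i)$ by induction.

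Assembling: the number of choices is at most
\[
\frac{[R:T]}{p^c}\cdot S(p_\ell,c_\ell)\cdot \left(\frac{[R:T]}{p^c}\right)^{\ell-2}\prod_{i=1}^{\ell-1}S(p_i,c_i)
= [R:T]^{\ell-1}\prod_{i=1}^{\ell}S(p_i,c_i),
\]
since $([R:T]/p^c)^{\ell-1}\cdot p^{c(\ell-1)}\cdot$(the leftover) telescopes — here one must be careful that $[R:T]/p^c$ is exactly the prime-to-$p_\ell$ part of the index, and $([R:T]/p^c)^{\ell-2}\cdot([R:T]/p^c)=([R:T]/p^c)^{\ell-1}$, then multiply by $p^{c(\ell-1)}$... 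I would verify this bookkeeping carefully rather than trust the first pass. The main obstacle is making the decomposition ``$H\leftrightarrow(H\cap P\text{-part},\, p'\text{-part})$'' precise and genuinely injective: a subgroup $H$ with prescribed Sylow $p$-subgroup need not split, so the right move is probably to fix a $T$-class representative as above and then argue that $H\mapsto (H\cap P,\; H_0)$ is injective where $H_0$ records how $H$ sits modulo $P$, i.e.\ work inside $N_R(T\cap P)$ and use that $T\cap P$ is normalized, mirroring exactly the maximal-subgroup trick in the proof of Proposition~\ref{prop:pgrp}. Once injectivity into a product of the expected size is established, the arithmetic identity above closes the induction.
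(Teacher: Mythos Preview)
Your proposal has a genuine gap at the inductive step, and the paper's proof avoids induction on $\ell$ entirely.

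First, two local errors. The sentence ``after conjugating $H$ by an element of $T$ we may assume $T\cap P\le H$ and in fact that $T\cap P$ is a Sylow $p$-subgroup of $H$'' is doubly wrong: since $T\le H$, conjugation by $T$ fixes $H$, so nothing is gained; and $T\cap P$ is a Sylow $p$-subgroup of $T$, not of $H$ in general (the $p$-part of $[H:T]$ can be anything from $1$ to $p^c$). What you can do --- and what the paper does --- is conjugate the \emph{chosen Sylow subgroup} $H_i$ of $H$ by elements of $T$ without changing $\langle T,H_1,\ldots,H_\ell\rangle=H$; that is where Lemma~\ref{l:1} enters.

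Second, and more seriously, the induction needs an ambient group $R'$ with $T\le R'\le R$ and $[R':T]=\prod_{i<\ell}p_i^{c_i}$, i.e.\ essentially a Hall $p_\ell'$-subgroup over $T$. You flag this yourself (``replacing $R$ by a Hall $\{p_1,\dots,p_{\ell-1}\}$-type object'', ``a subgroup $H$ with prescribed Sylow $p$-subgroup need not split''), but no such object exists in a general finite group, and working inside $N_R(T\cap P)$ does not help: that normaliser has no reason to have $p_\ell$-free index over $T$. Without it the pair $(H\cap P,\text{``$p'$-part''})$ is undefined and the map you want to be injective has no codomain. Your displayed arithmetic also does not match the target: the product you write equals $\bigl([R:T]/p^c\bigr)^{\ell-1}\prod_i S(p_i,c_i)$, and the ad hoc factor $p^{c(\ell-1)}$ you try to insert afterwards has no provenance in the argument.

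The paper's route is to treat all primes at once rather than peeling one off. Every $H\in\sub(R,T)$ satisfies $H=\langle T,H_1,\ldots,H_\ell\rangle$ with $H_i\in\syl_{p_i}(H)$; one may choose each $H_i$ so that $H_i\cap T\in\syl_{p_i}(T)$, and then $H_i\le R_i$ for some $R_i\in\syl_{p_i}(R,T)$. Because $T\le H$, replacing each $H_i$ by a $T$-conjugate does not change $H$, so the $R_i$ need only be chosen up to $T$-conjugacy: by Lemma~\ref{l:1} this contributes at most $\prod_i [R:T]/p_i^{c_i}=[R:T]^{\ell-1}$. With the $R_i$ fixed, each $H_i$ lies in $\sub(R_i,R_i\cap T)$, contributing at most $S(p_i,c_i)$ by Proposition~\ref{prop:pgrp}. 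Multiplying gives the bound directly, with no need for Hall complements or an inductive ambient group.
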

	\begin{proof}

Let $H \in \sub(R,T)$. Then $H$ is generated by $T$ and an $\ell$-tuple $(H_1, \ldots, H_\ell)$, where each $H_i$ is a $p_i$-Sylow subgroup of $H$.

As in Lemma~\ref{l:1}, for each $i \in \{1, \ldots, \ell\}$, let
\[
\syl_{p_i}(R,T) = \{ R_i \in \syl_{p_i}(R) \mid R_i \cap T \in \syl_{p_i}(T) \}.
\]
Possibly replacing the $\ell$-tuple $(H_1, \ldots, H_\ell)$, we may assume without loss of generality that $H_i \in \syl_{p_i}(H,T)$ for all $i$. By Sylow's theorem, each $H_i$ is contained in a $p_i$-Sylow subgroup $R_i$ of $R$. Observe that
\[
R_i \cap T = R_i \cap H \cap T = H_i \cap T \in \syl_{p_i}(T),
\]
so $R_i \in \syl_{p_i}(R,T)$. Moreover, since $H$ contains $T$, for every $t_1, \dots, t_{\ell}\in T$, the $\ell$-tuples $(H_1,\ldots,H_\ell)$ and $(H_1^{t_1},\ldots,H_\ell^{t_\ell})$ generate together with $T$ the same subgroup $H$.

Therefore, to bound the number of subgroups of $R$ containing $T$, we estimate the number of $\ell$-tuples $(R_1, \ldots, R_\ell)$ with $R_i \in \syl_{p_i}(R,T)$, taken up to conjugation by $T$. Once the $R_i$ are fixed, we count the number of possible subgroups $H_i \leq R_i$ such that $H_i \in \syl_{p_i}(H,T)$.

By Proposition~\ref{prop:pgrp}, we know that
\[
|\sub(R_i, R_i \cap T)| \leq S(p_i, c_i).
\]
Applying Lemma~\ref{l:1}, the number of $T$-conjugacy classes of such $R_i$ is at most $[R:T]/p_i^{c_i}$.
Hence, the number of such $\ell$-tuples is at most
\[
\prod_{i=1}^{\ell} \frac{[R:T]}{p_i^{c_i}} = [R:T]^{\ell - 1}
\]
and the result follows.
	\end{proof}

	\subsection{Arithmetic observations }
	\begin{lemma}\label{lemma0}Let $p$ be a prime number, let $c$ be a positive integer and let $r/t$ be a multiple of $p^{c}$. Then $S(p,c)\le 7.3722\cdot p^{c\frac{\log_2 (r/t)}{4}}$.
	\end{lemma}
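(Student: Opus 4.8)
The plan is to reduce the inequality to a statement about the growth of $S(p,c)$ against $p^{c^2/4}$, since $r/t$ being a multiple of $p^c$ gives $\log_2(r/t)\ge c\log_2 p\ge c$, hence $p^{c\log_2(r/t)/4}\ge p^{c^2/4}$. So it suffices to prove the cleaner claim $S(p,c)\le 7.3722\cdot p^{c^2/4}$ for every prime $p$ and every positive integer $c$. This removes $r$ and $t$ from the picture entirely and turns the lemma into a finite case analysis plus one asymptotic regime.

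First I would dispose of the range $c\ge 6$, where by definition $S(p,c)=c(p)p^{c^2/2}=2.129\,C(p)\,p^{c^2/2}$. Here I need $2.129\,C(p)\,p^{c^2/2}\le 7.3722\,p^{c^2/4}$, i.e. $2.129\,C(p)\le 7.3722\,p^{-c^2/4}$. This is false for large $c$, so the reduction above is too lossy and I must instead keep the exponent $c\log_2(r/t)/4$ rather than collapsing it to $c^2/4$: the correct inequality to target is $S(p,c)\le 7.3722\cdot p^{c\log_2(r/t)/4}$ with $\log_2(r/t)\ge c\log_2 p$, giving exponent at least $c^2(\log_2 p)/4$. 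So for $c\ge 6$ I must show $2.129\,C(p)\,p^{c^2/2}\le 7.3722\,p^{c^2(\log_2 p)/4}$, equivalently $2.129\,C(p)\le 7.3722\,p^{c^2(\log_2 p-2)/4}$. For $p\ge 4$, i.e. $p\ge 5$, we have $\log_2 p\ge 2$ so the right side is $\ge 7.3722$, and since $C(p)\le C(2)<3.5$ (actually $C(p)$ decreases in $p$ and $2.129\,C(2)\approx 7.45$ — this needs a careful check, possibly the bound is tight precisely here), the inequality should hold comfortably for $p\ge 3$ as well once the $c^2$ factor is used: for $p=3$, $\log_2 3-2\approx -0.415$, so the right side is $7.3722\cdot 3^{-0.415\,c^2/4}$, which \emph{decreases} in $c$ — this is again the wrong direction.

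The resolution, and the genuinely delicate point, is that $\log_2(r/t)\ge c\log_2 p$ is wasteful: since $r/t$ is a multiple of $p^c$ and is an integer that is at least... well, it could equal $p^c$ exactly. So the bound $\log_2(r/t)\ge c\log_2 p$ is sharp, and for $p=2$ it gives exponent exactly $c^2/4$, making the target $S(2,c)\le 7.3722\cdot 2^{c^2/4}$. But $S(2,c)=2.129\,C(2)\,2^{c^2/2}$ for $c\ge 6$, and $2^{c^2/2}\gg 2^{c^2/4}$, so the lemma as I have been reading it is simply false for $p=2$, $c\ge 6$. Hence $S(p,c)$ in this lemma must implicitly not be the subgroup-counting function but rather a bound on $\sum_k\binomq{c}{k}_p$, or the exponent in the lemma statement must be read with $r/t$ genuinely larger — re-examining, in the application $r/t=[R:T]$ is divisible by \emph{several} prime powers, but Lemma~\ref{lemma0} is stated for a single prime, so that does not help. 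I therefore expect the intended proof to use the \emph{Gaussian binomial sum} bound $\sum_{k=0}^c\binomq{c}{k}_p\le S(p,c)$ together with the much better estimate $\binomq{c}{k}_p\le p^{k(c-k)}\cdot C(p)\le p^{c^2/4}C(p)$, summed over $k$, giving $\sum_k\binomq{c}{k}_p\le (c+1)C(p)p^{c^2/4}$ or a similar refinement, and then absorbing the polynomial factor $(c+1)$ and $C(p)$ into $7.3722$ using the slack $p^{c(\log_2 p - \log_2 p)/4}$...

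Let me restart the plan more honestly. \textbf{Proof proposal (revised).} Set $s=\log_2(r/t)$, so $s\ge c\log_2 p$ since $p^c\mid r/t$; the goal is $S(p,c)\le 7.3722\cdot p^{cs/4}$, and since $p^{cs/4}$ is increasing in $s$ it suffices to treat $s=c\log_2 p$, i.e. to prove $S(p,c)\le 7.3722\cdot p^{c^2\log_2 p/4}=7.3722\cdot 2^{c^2(\log_2 p)^2/4}$. For $c\ge 6$ this means checking $2.129\,C(p)\,p^{c^2/2}\le 7.3722\cdot 2^{c^2(\log_2 p)^2/4}$, equivalently $2.129\,C(p)\le 7.3722\cdot 2^{c^2((\log_2 p)^2/4-(\log_2 p)/2)}=7.3722\cdot 2^{c^2(\log_2 p)(\log_2 p-2)/4}$; the exponent is nonnegative exactly when $p\ge 4$, i.e. $p\ge 5$, and for $p\in\{2,3\}$ the exponent is negative and the worst case is $c=6$, where $2^{36\cdot(\log_2 p)(\log_2 p-2)/4}=2^{9(\log_2 p)(\log_2 p-2)}$ must be checked to still exceed $2.129\,C(p)/7.3722$; for $p=3$ this is $2^{9\cdot 1.585\cdot(-0.415)}\approx 2^{-5.92}\approx 0.0165$ versus $2.129\cdot C(3)/7.3722\approx 2.129\cdot 1.785/7.3722\approx 0.515$ — which \emph{fails}. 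So either the intended reading is that $r/t$ is strictly larger than $p^c$ in the relevant application, or $c(p)p^{a^2/2}$ should be $c(p)p^{a^2/4}$ (a likely typo in the excerpt, matching the "$r^{\log_2 r/4}$" theme of the paper), in which case the $c\ge 6$ case becomes $2.129\,C(p)\le 7.3722\cdot 2^{c^2(\log_2 p)(\log_2 p-2)/4+ c^2(\log_2 p)/4 \cdot 0}$... The \textbf{main obstacle}, then, is pinning down which definition of $S(p,a)$ is in force; granting that $S(p,a)=c(p)p^{a^2/4}$ for $a\ge 6$, the $c\ge 6$ case reduces to $2.129\,C(p)\le 7.3722\,2^{c^2(\log_2 p)(\log_2 p-2)/4}$, true for $p\ge 5$ since the right side is $\ge 7.3722>2.129\,C(2)\ge 2.129\,C(p)$, and true for $p\in\{2,3\}$ and $c\ge 6$ once one notes $C(p)$ is small enough after a direct numerical check. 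The small cases $c\in\{1,2,3,4,5\}$ are then finished by comparing the explicit polynomials in $p$ for $S(p,c)$ against $7.3722\cdot 2^{c^2(\log_2 p)^2/4}$: for each fixed $c$ this is a one-variable inequality in $p\ge 2$, verified by checking $p=2$ by hand and noting that the left side is polynomial in $p$ of degree $\le c^2/4+O(c)$ while the right grows like $p^{c^2(\log_2 p)/4}$ which dominates for $p\ge 2$. I would present the proof by: (i) reducing to $s=c\log_2 p$; (ii) the case $c\le 5$ via five explicit one-variable checks; (iii) the case $c\ge 6$ via the displayed inequality on $C(p)$, splitting at $p=5$. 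Throughout, the one numerical fact I would isolate as a sub-lemma is $2.129\,C(2)\le 7.3722$ (equivalently $C(2)\le 3.4627$), which underlies both the large-$c$ and the base cases.
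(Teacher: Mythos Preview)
Your approach is essentially the paper's: reduce to $r/t=p^c$ by monotonicity of $\log_2$, handle $c\in\{1,2,3,4,5\}$ by comparing the explicit polynomial $S(p,c)$ against $7.3722\cdot p^{(c^2/4)\log_2 p}$, and handle $c\ge 6$ via the single numerical fact $c(p)\le c(2)<7.3722$. You also correctly diagnosed the typo in the displayed definition of $S(p,a)$: for $a\ge 6$ it should read $c(p)\,p^{a^2/4}$, not $c(p)\,p^{a^2/2}$, and indeed the paper's own proof of this lemma uses $S(p,c)=c(p)\,p^{c^2/4}$.

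There is, however, an arithmetic slip that causes the unnecessary detour in your revised plan. After adopting the corrected definition $S(p,c)=c(p)\,p^{c^2/4}$, the target $S(p,c)\le 7.3722\cdot p^{(c^2/4)\log_2 p}$ rearranges (in base $2$) to
\[
c(p)\ \le\ 7.3722\cdot 2^{\,c^2(\log_2 p)(\log_2 p-1)/4},
\]
not with $(\log_2 p-2)$ in the exponent; you carried the $-2$ over from the earlier computation done with the uncorrected $p^{c^2/2}$. Since $\log_2 p\ge 1$ for every prime $p$, the right-hand side is always at least $7.3722$, so the $c\ge 6$ case collapses immediately to the single inequality $c(p)\le c(2)=2.129\,C(2)<7.3722$, with no separate treatment of $p\in\{2,3\}$ needed. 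The paper does exactly this, in fact even more directly: it uses $\log_2 p\ge 1$ to replace the target exponent $(c^2/4)\log_2 p$ by the smaller $c^2/4$, and then compares $c(p)\,p^{c^2/4}$ with $7.3722\,p^{c^2/4}$.
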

	\begin{proof}
		Since $r/t\to\log_2 (r/t)$ is monotone increasing, we may suppose that $r/t=p^{c}$. In particular, $7.3722\cdot p^{c\frac{\log_2r/t}{4}}=7.3722
		\cdot p^{\frac{c^2}{4}\log_2p}$. Now the proof follows by distinguishing various possibilities for $c$. When $c=1$, we have $S(p,1)=2$ and $$7.3722\cdot p^{\frac{c^2}{4}\log_2p}=7.3722\cdot p^{\frac{\log_2p}{4}}\ge 7.3722\cdot 2^{1/4}=9.5136.$$ When $c=2$, $S(p,2)=p+3$ and $$7.3722\cdot p^{\frac{c^2}{4}\log_2p}=7.3722\cdot p^{\log_2p}\ge 7.3722\cdot p;$$ clearly, $p+3\le 7.3722\cdot p$. The cases $c\in \{3,4,5\}$ are entirely similar.
		
		Suppose $c\ge 6$. Now, $c(p)p^{\frac{c^2}{4}}=S(p,c)\le 7.3722\cdot p^{\frac{c^2}{4}}$ if and only if $c(p)\le 7.3722$. From~\eqref{eq:cp}, $p\mapsto c(p)$ is a monotone decreasing function and hence $c(p)\le c(2)=7.372187<7.3722$.
	\end{proof}
	
	\begin{lemma}\label{lemma:numeric}
		Let $r$ be an integer, let $t$ be a divisor of $r$,  let $p$ be a prime divisor of $r/t$, and let $p^c$ be the largest power of $p$ dividing $r/t$. Then,
		\[
			\frac{r}{t}S(p,c)\leq \left(\frac{r}{t}\right)^{c\frac{\log_2 \left(\frac{r}{t}\right)}{4}},
		\]
		unless $(p,c,r/t)$ are as in Table~$\ref{Table:expcetions}$.
	\end{lemma}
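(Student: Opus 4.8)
\emph{Proof plan.}
Write $n=r/t$ and $u=\log_2 n$. Since $n$ is a multiple of $p^c$ we have $n\ge p^c\ge 2$, so we may take logarithms base $2$: the asserted inequality $n\,S(p,c)\le n^{c\log_2 n/4}$ is equivalent to
\[
\frac{c}{4}\,u^2-u-\log_2 S(p,c)\ \ge\ 0 .
\]
The left side is an upward parabola in $u$ whose larger root is $y_0(p,c):=\frac{2}{c}\bigl(1+\sqrt{1+c\log_2 S(p,c)}\bigr)$ (the smaller root is negative, since $S(p,c)\ge 2$), so the inequality holds if and only if $n\ge 2^{y_0(p,c)}$. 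The key observation is that this is \emph{automatic whenever $c\log_2 p\ge y_0(p,c)$}, because $n\ge p^c$ forces $u\ge c\log_2 p$. Consequently it suffices to (i) show $c\log_2 p\ge y_0(p,c)$ for all but finitely many pairs $(p,c)$, and (ii) for the remaining pairs, list the finitely many admissible integers $n$ with $p^c\le n<2^{y_0(p,c)}$ --- these are exactly the exceptions.

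For $c\ge 6$ I would use $S(p,c)=c(p)\,p^{c^2/2}<7.3722\,p^{c^2/2}$ from Lemma~\ref{lemma0} and its proof, giving $\log_2 S(p,c)<2.883+\frac{c^2}{2}\log_2 p$. Substituting and squaring, the desired inequality $c\log_2 p\ge y_0(p,c)$ reduces to $c^3u^2-(2c^2+4c)\,u-11.532\ge 0$ with $u=\log_2 p\ge 1$; this holds at $(c,u)=(6,1)$ and has both partial derivatives positive on $\{c\ge 6,\ u\ge 1\}$, so it holds throughout. Hence there is no exception with $c\ge 6$.

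For $c\in\{1,2,3,4,5\}$ the function $S(p,c)$ is the explicit polynomial of Section~\ref{sec:1}. Using crude bounds such as $S(p,2)<\tfrac{5}{2} p$, $S(p,3)<4p^2$, $S(p,4)<5p^4$, $S(p,5)<6p^6$ (all valid for $p\ge 2$), the inequality $c\log_2 p\ge y_0(p,c)$ becomes, after squaring, one of the form $(\alpha u-\beta)^2\ge\gamma+\delta u$ in $u=\log_2 p$. Solving each shows that $c\log_2 p\ge y_0(p,c)$ holds for every prime $p$ when $c\in\{4,5\}$, for every $p\ge 3$ when $c=3$, for every $p\ge 5$ when $c=2$, and, since $y_0(p,1)=2+2\sqrt2$ does not depend on $p$, for every $p\ge 29$ when $c=1$. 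So the only pairs that can contribute exceptions are $(2,3),(2,2),(3,2)$ and $(p,1)$ for $p\in\{2,3,5,7,11,13,17,19,23\}$. For all of these $2^{y_0(p,c)}<29$, so only finitely many integers $n$ remain to be checked one by one; recording those for which $n\,S(p,c)\le n^{c\log_2 n/4}$ \emph{fails} yields Table~\ref{Table:expcetions}.

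The main obstacle is making step (i) airtight: one must verify that the listed set of pairs with $c\log_2 p<y_0(p,c)$ is complete (this uses monotonicity in $u\ge 1$ of the auxiliary quadratics above, together with the monotonicity in $p$ of $c\log_2 p-y_0(p,c)$), and then carry out the finite case check of step (ii), which is precisely what populates the table. Everything else is bounded elementary arithmetic.
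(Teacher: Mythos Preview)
Your analysis contains a genuine gap, but it stems from a typo in the paper rather than from your method. As stated, the right-hand side of the lemma is $(r/t)^{\,c\log_2(r/t)/4}$, and you correctly analyse this: setting $n=r/t$ and $u=\log_2 n$ gives the quadratic $\tfrac{c}{4}u^2-u-\log_2 S(p,c)\ge 0$, and your threshold $y_0(p,c)$ and the elimination of all $c\ge 4$ are sound. The problem is your final sentence. Your own argument shows that for the inequality you are proving there are \emph{no} exceptions when $c\ge 4$, and that for $c\le 3$ every exception has $n<29$; yet Table~\ref{Table:expcetions} lists exceptions with $c\ge 6$, exceptions with $c=4,5$, and values of $r/t$ as large as $407\,850$. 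So ``recording those \ldots\ yields Table~\ref{Table:expcetions}'' is simply false.

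The resolution is that the intended right-hand side is $p^{\,c\log_2(r/t)/4}$, not $(r/t)^{\,c\log_2(r/t)/4}$; this is the form used in Proposition~\ref{prop:exceptions} and in the proof of Theorem~\ref{thm:main}, and it is the form treated in Lemmas~3.3--3.8 of \cite{Spiga}, to which the paper's proof defers. With that correction the inequality becomes \emph{linear} in $u$:
\[
\Bigl(\tfrac{c\log_2 p}{4}-1\Bigr)u\ \ge\ \log_2 S(p,c),
\]
and the structure of Table~\ref{Table:expcetions} is then visible: whenever $c\log_2 p\le 4$ (that is, $p^c\le 16$) the coefficient of $u$ is non-positive and the inequality fails for every $n$, explaining most of the ``any $r/t$'' rows; when $c\log_2 p$ is only slightly larger than $4$ (for instance $p=17,c=1$ or $p=3,c=3$) the threshold on $n$ is astronomically large and is again recorded as ``any $r/t$''; and the remaining pairs give the explicit bounds on $r/t$ shown in the table (e.g.\ $p=5,c=2$ gives $u\ge \log_2 S(5,2)/(\tfrac{\log_2 5}{2}-1)\approx 18.64$, matching $r/t\le 407\,850$). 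Your quadratic approach does not produce this behaviour; to prove the lemma as it is used in the paper you must redo the analysis with the linear inequality above.
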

	\begin{proof}
		It follows from Lemmas~$3.3$-$3.8$ of~\cite{Spiga}.
	\end{proof}
	\begin{table}[]
		\begin{tabular}{|c|c|c|}
			\hline
			$c$ & $p$ & Comments on $r/t$   \\
			\hline
			$1$& $23$     &   $r/t\leq 184$     \\
			 & $19$     &   $r/t\leq 71\,896$   \\
			 & $2,3,5,7,11,13,17$ & any $r/t$\\
			 \hline
			$2$ & $7$& $r/t \leq 294$ \\
			 & $5$ & $  r/t \leq 407\,850$ \\
			 & $2,3$ & any $r/t$ \\
			 \hline
			 $3$ & $5$ & $r/t\leq 250$ \\
			 	& $2,3$ & any $r/t$ \\
			 \hline 
			 $4$ & $3$ & $r/t\leq 9\,396$ \\
			     & $2$   &  any $r/t$\\
			 \hline 
			 $5$ & $3$ & $r/t\leq 34\,375$ \\
			 	 & $2$ & any $r/t$\\
			\hline	
			$c\geq 6$ & $3$ & $r/t \in \{729,1\,458,2\,187,2\,916\}$ \\
						& $2$ & any $r/t$\\
			\hline
		\end{tabular}
		\caption{Exceptions in Lemma~\ref{prop:exceptions} with small primes}
	\end{table}\label{Table:expcetions}
	
	\begin{proposition}\label{prop:exceptions}
		Let $R$ be a finite group, let $T $ be a subgroup of $R$, let $p$ be a prime divisor of $[R:T]$, and let $p^c$ be the largest power of $p$ dividing $[R:T]$. Then, either $r/t S(p,c) \leq p^{c\frac{\log_2 (r/t)}{4}}$, or one of the following holds
		\begin{enumerate}
			\item\label{corollary1:1} $R$ and $T$ satisfy Theorem~\ref{thm:main},
			\item\label{corollary1:2} $p \in \{5,7,11,13,17\}$ and $c=1$,
			\item\label{corollary1:3} $p = 3$ and $c \leq 3$,
			\item\label{corollary1:4} $p=2$.
		\end{enumerate}
	\end{proposition}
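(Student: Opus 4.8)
The plan is to carry out a finite case analysis governed by Lemma~\ref{lemma:numeric} and Table~\ref{Table:expcetions}. Write $r=|R|$ and $t=|T|$, so that $[R:T]=r/t$ and $p^{c}$ is the exact power of $p$ dividing $r/t$. If the triple $(p,c,r/t)$ does not occur in Table~\ref{Table:expcetions}, then the first alternative $\frac{r}{t}S(p,c)\leq p^{c\log_{2}(r/t)/4}$ follows at once from Lemma~\ref{lemma:numeric} (this is the form in which the statement is extracted from Lemmas~$3.3$--$3.8$ of~\cite{Spiga}). Thus the whole proof comes down to inspecting, one at a time, the triples that \emph{do} occur in Table~\ref{Table:expcetions}.

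First I would dispose of the rows of Table~\ref{Table:expcetions} in which $r/t$ is unrestricted. Reading them off, these are precisely: $c=1$ with $p\in\{2,3,5,7,11,13,17\}$; $c\in\{2,3\}$ with $p\in\{2,3\}$; and $c\geq 4$ with $p=2$. In each such row one of the alternatives~\eqref{corollary1:2}--\eqref{corollary1:4} holds: if $p=2$ we land in~\eqref{corollary1:4}; if $p=3$ then necessarily $c\leq 3$ and we land in~\eqref{corollary1:3}; and if $p\in\{5,7,11,13,17\}$ then $c=1$ and we land in~\eqref{corollary1:2}. So these rows require nothing further.

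For the remaining rows, $r/t$ is bounded above by an explicit constant (the largest being $407\,850$, in the row $p=5$, $c=2$), and there the prime $p$ is one of $3$ (with $c\geq 4$), $5$, $7$, $19$ or $23$. For such $R$ and $T$ I claim that alternative~\eqref{corollary1:1} holds, i.e.\ that they already satisfy the bound of Theorem~\ref{thm:main}. Since $r/t$ ranges over a fixed finite set, and since $2\cdot 3\cdot 5\cdot 7\cdot 11\cdot 13\cdot 17=510\,510$ exceeds every bound appearing in the table, the number $\ell$ of distinct prime divisors of $r/t$ is at most $6$. Writing $r/t=\prod_{i=1}^{\ell}p_{i}^{c_{i}}$ and combining Proposition~\ref{prop:nrsbgrpindex} with the explicit formula for $S(p_{i},c_{i})$, it then suffices to verify
\[
(r/t)^{\ell-1}\prod_{i=1}^{\ell}S(p_{i},c_{i}) \leq 7.3722\,(r/t)^{\frac{\log_{2}(r/t)}{4}+1.8919}
\]
for each of the finitely many admissible values of $r/t$, which is done row by row just as in the corresponding part of~\cite{Spiga}.

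I expect this last step to be the only real obstacle. In the worst instances the margin is only moderate: in the row $p=5$, $c=2$ one may have $r/t=2^{13}\cdot 5^{2}=204\,800$, so that the other prime dividing $r/t$ contributes the factor $S(2,13)=c(2)\,2^{169/2}$, and one has to estimate this carefully --- using the defining formula for $S$ (in particular $S(p,c)=c(p)p^{c^{2}/2}$ for $c\geq 6$, with $c(p)\leq 7.3722$) together with the tabulated values of $S(p,c)$ for $c\leq 5$ --- to confirm that the left-hand side above still lies comfortably below the right-hand side. Apart from this bookkeeping, the argument is simply a reorganization of Lemma~\ref{lemma:numeric}, Proposition~\ref{prop:nrsbgrpindex}, and the case list encoded in Table~\ref{Table:expcetions}.
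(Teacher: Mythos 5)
Your argument is correct and is essentially the paper's own proof: both reduce via Lemma~\ref{lemma:numeric} to the triples in Table~\ref{Table:expcetions}, absorb the rows with unrestricted $r/t$ into alternatives~\eqref{corollary1:2}--\eqref{corollary1:4}, and dispose of the rows with bounded $r/t$ by a finite verification (done by computer in the paper) that $(r/t)^{\ell-1}\prod_i S(p_i,c_i)\le 7.3722\,(r/t)^{\log_2(r/t)/4+1.8919}$, which by Proposition~\ref{prop:nrsbgrpindex} yields alternative~\eqref{corollary1:1}. Your reading of Lemma~\ref{lemma:numeric} as giving the bound $p^{c\log_2(r/t)/4}$ (rather than the literal $(r/t)^{c\log_2(r/t)/4}$ printed there) is the intended one and is what the rest of the paper uses.
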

	\begin{proof}
		Suppose that $(r/t)S(p,c)>p^{c\frac{\log_2(r/t)}{4}}$. In particular, $(c,p,r/t)$ are as in Table~\ref{Table:expcetions}. We consider in turn each of these cases.
		
		When $c=1$, we are as in the first row of Table~\ref{Table:expcetions}. In particular, we only need to deal with $p=19$ and $p=23$, because when $p\le 17$ we see that parts~\eqref{corollary1:2}--\eqref{corollary1:4} are satisfied. In particular, we only have a finite number of cases to consider. Let $r=p_1^{a_1}\cdots p_\ell^{a_\ell}$ and $t = p_1^{b_1}\cdots p_\ell^{b_\ell}$ be the factorization of $r$ and $t$,  into distinct prime powers. Let us consider the function
		$$f\left(\frac{r}{t}\right):=\left(\frac{r}{t}\right)^{\ell-1}\prod_{i=1}^\ell S(p_i,a_i-b_i).$$
		By Proposition~\ref{prop:nrsbgrpindex}, if $f(r/t)\le 7.3722\cdot (r/t)^{\log_2(r/t)/4+1.8919}$, then Theorem~\ref{thm:main} holds and hence part~\eqref{corollary1:1} is satisfied. Therefore, we may suppose that $f(r/t)> (r/t)^{\log_2(r/t)/4+1.8919}$.
		We have implemented this function in a computer and we have checked that no $r/t$ in the range described in  Table~\ref{Table:expcetions} satisfies $f(r/t)>7.3722\cdot (r/t)^{\log_2(r/t)/4+1.8919}$.
		
		When $c=2$, we are as in the second row of Table~\ref{Table:expcetions}. In particular, we only need to deal with $p=5$ and $p=7$, because when $p\le 3$ we see that parts~\eqref{corollary1:2}--\eqref{corollary1:4} are satisfied. In particular, we only have a finite number of cases to consider. We have checked that no $r/t$ in the range described in Table~\ref{Table:expcetions} satisfies $f(r/t)>7.3722\cdot (r/t)^{\log_2(r/t)/4+1.8919}$. 
		
		The  the cases $a\ge 3$ are analogous.
	\end{proof}
	We are now ready to complete the proof of Theorem~\ref{thm:main}.  As in Proposition~\ref{prop:nrsbgrpindex}, we write $[R:T]=\prod_{i=1}^\ell p_i^{c_i}$. Let $\mathcal{I}$ be the collection of indices $i \in \{1,2,\dots,\ell\}$ with $$\frac{r}{t}S(p_i,c_i) >  p_i^{c_i\frac{\log_2 (r/t)}{4}},$$ and let $\mathcal{P} = \{p_i \, : \, i \in \mathcal{I}\}$. From Proposition~\ref{prop:exceptions}, we can restrict to the case where $\mathcal{P}\subseteq \{2,3,5,7,11,13,17\}$. Moreover, if $i \in \mathcal{I}$ with $p_i \in \{5,7,11,13,17\}$, then $c_i = 1$. In particular, $|\mathcal{I}|\le 7$.
	
From Propositions~\ref{prop:nrsbgrpindex} and~\ref{prop:exceptions}, we have
\begin{align}\label{eq:1}
|\mathrm{Sub}(R,T)|&\le (r/t)^{|\mathcal{I}|-1}\prod_{i\in I}S(p_i,c_i)\prod_{\substack{i=1\\i\notin \mathcal{I}}}^\ell (r/t)S(p_i,c_i)
\le (r/t)^{|\mathcal{I}|-1}\prod_{i\in \mathcal{I}}S(p_i,c_i)\prod_{\substack{i=1\\i\notin \mathcal{I}}}^\ell p_i^{\frac{c_i\log_2(r/t)}{4}}.
\end{align}

Now, when $2\in\mathcal{P}$, let $i_0\in \mathcal{I}$ with $p_{i_0}=2$ and set $\mathcal{I}_0=\{i_0\}$; when $2\notin\mathcal{P}$, set $\mathcal{I}_0=\emptyset$. Let $\alpha>0$ be  minimal subject to
$$\left(\frac{r}{t}\right)^{|\mathcal{I}|-|\mathcal{I}|_0}\prod_{i\in\mathcal{I}\setminus\mathcal{I}_0}S(p_i,c_i)\le \left(\frac{r}{t}\right)^\alpha \prod_{i\in\mathcal{I}\setminus\mathcal{I}_0}p_i^{c_i\frac{\log_2(r/t)}{4}}.$$
By taking the logarithm on both sides of this inequality and rearranging the terms we deduce
\begin{align*}
\alpha&\ge |\mathcal{I}|-|\mathcal{I}_0|+\frac{\sum_{i\in\mathcal{I}\setminus\mathcal{I}_0}\log(S(p_i,c_i))}{\log(r/t)}-\sum_{i\in\mathcal{I}\setminus\mathcal{I}_0}\frac{c_i\log(p_i)}{4\log(2)}.
\end{align*}
Observe now that $r/t\ge \prod_{i\in\mathcal{I}\setminus\mathcal{I}_0}p_i^{c_i}$ and hence
\begin{align}\label{eq:26}
\alpha&\ge |\mathcal{I}|-|\mathcal{I}_0|+\frac{\sum_{i\in\mathcal{I}\setminus\mathcal{I}_0}\log(S(p_i,c_i))}{\sum_{i\in\mathcal{I}\setminus\mathcal{I}_0}c_i\log(p_i)}-\sum_{i\in\mathcal{I}\setminus\mathcal{I}_0}\frac{c_i\log(p_i)}{4\log(2)}.
\end{align}
Recall $\{p_i\mid i\in\mathcal{I}\setminus\mathcal{I}_0\}\subseteq\{3,5,7,11,13,17\}$ and recall that $c_i\le 3$ for each $i$ (in fact, $c_i=1$ for each $i$, except possibly for the prime $3$, where $c_i\le 3$). Therefore, the right hand side of~\eqref{eq:26} can be explicitly computed with the help of a calculator. We obtain that we may choose $\alpha=1.8919$.

Now, from~\eqref{eq:1}, when $\mathcal{I}_0=\emptyset$, we deduce
\begin{align*}
|\mathrm{Sub}(R,T)|&\le 
 (r/t)^{\alpha-1}\prod_{i=1}^\ell p_i^{\frac{c_i\log_2(r/t)}{4}}=(r/t)^{\log_2(r/t)/4+\alpha-1}.
\end{align*}	
Similarly, when $\mathcal{I}_0\ne\emptyset$, from Lemma~\ref{lemma0} applied with $p=2$, we deduce
\begin{align*}
|\mathrm{Sub}(R,T)|&\le 
 S(2,c_{i_0})(r/t)^{\alpha}\prod_{\substack{i=1\\i\ne i_0}}^\ell p_i^{\frac{c_i\log_2(r/t)}{4}}\le 7.3722 (r/t)^\alpha
 \prod_{i=1}^\ell p_i^{\frac{c_i\log_2(r/t)}{4}}=
 7.3722 (r/t)^{\log_2(r/t)/4+\alpha}.
\end{align*}
	\thebibliography{10}
	\bibitem{B}A.~V.~Borovik, On the Number of Maximal Soluble
	Subgroups of a Finite Group, \textit{Comm. Algebra} \textbf{26}, 4041--4050.
	\bibitem{BPS}A.~V.~Borovik, L.~Pyber, A.~Shalev, Maximal subgroups in finite and profinite groups,  \textit{Trans. Amer. Math. Soc.} \textbf{348} (1996), 3745--3761.
	\bibitem{FusSp} M.~Fusari, P.~Spiga, On the maximum number of subgroups of a finite group, \textit{Journal of Algebra}, \textbf{635} (2023), 486--526.
	\bibitem{GSX}	Y.~Gan, P.~Spiga, and B.~Xia,
	Asymptotic enumeration of Haar graphical representations,
	\textit{arXiv preprint} \textbf{arXiv:2405.09192}, 2024.
	\bibitem{LPS}M.~W.~Liebeck, L.~Pyber, A.~Shalev, On a conjecture of G.~E.~Wall, \textit{J. Algebra} \textbf{317} (2007), 184--197.
	\bibitem{LMS} A.~Lucchini, M.~Moscatiello, and P.~Spiga,
	A polynomial bound for the number of maximal systems of imprimitivity of a finite transitive permutation group,
	\textit{Forum Math.} \textbf{32} (2020), no.~3, 713--721.
	\bibitem{Py} L.~Pyber, Enumerating Finite Groups of Given Order. \textit{Annals of Mathematics}, 137, (1993), 203–220.
	\bibitem{Spiga} P.~Spiga, An explicit upper bound on the number of subgroups of a finite group, \textit{Journal of Pure and Applied Algebra}, \textbf{227} (2023), no.~6, 107312.
	
\end{document}